\begin{document}

%%题目
\title{\textbf{On-diagonal lower estimate of heat kernel on graphs}}
\author{Yong Lin \quad Yiting Wu }

\date{}
\maketitle

\vspace{-12pt}

%%摘要
\begin{minipage}{145mm}
\noindent{\small\textbf{Abstract} \; The purpose of this paper is to establish a new
continuous-time on-diagonal lower
estimate of heat kernel for large time on graphs. To achieve the
goal, we first give an upper bound of heat kernel in natural graph
metric, and then use this bound and the volume growth condition to 
show the validity of the on-diagonal lower bound.
\newline\textbf{Keywords} \; Volume growth, Heat kernel estimate on graphs}
\newline\noindent \textbf{2010 Mathematics Subject Classification}: 58J35;
05C50
\end{minipage}

\vspace{12pt}

\section{Introduction}

Over the last decades, there has been remarkable progress in our understanding of global pointwise upper and lower bounds
of heat kernel on Riemannian manifolds \cite{Grigoryan2,Davies1,Grigoryan,LY}. The celebrated Li-Yau inequality \cite{LY}
can show that the
heat kernel on
non-negatively curved manifolds satisfies the Gaussian type bounds, that is,
\begin{equation*}
\tag{1.1}
\frac{C_l}{V(x,\sqrt{t})}\exp\left(-c_l\frac{d(x,y)^2}{t}\right)\leq p(t,x,y) \leq
\frac{C_r}{V(x,\sqrt{t})}\exp\left(-c_r\frac{d(x,y)^2}{t}\right).
\end{equation*}

Similar methods have been used to study heat kernel on graphs. Recently, Bauer et al. \cite{BHLLMY} established
a discrete analogue of the Li-Yau inequality and derived a heat kernel estimate under the condition of $CDE(n,0)$.
Despite the upper bound in their results is formulated with Gaussian form, the lower bound is not
quite Gaussian form and is dependent on the parameter $n$. Based on this, Horn et al. \cite{HLLY} improved some results in \cite{BHLLMY} and got the Gaussian type bounds via introducing $CDE'(n,0)$. In addition,
Lin et al. \cite{LLY2,LLY1} investigated the gradient estimate for positive functions and illustrated the
applications of these results in establishing certain upper bounds and lower bounds of the heat kernel on graphs.

In \cite{Davies2}, Davies obtained non-Gaussian upper bounds of heat kernel on graphs in the continuous time setting. The
most interesting feature of his results is that they involve certain functions defined as Legendre transform. In
\cite{bobo}, Bauer et al. established a sharp version of DGG Lemma on graphs. As a direct application, it yields the
Davies's heat kernel estimate. In particular, for large time the DGG Lemma meets the Gaussian type estimate for the heat
kernel in form of $\exp\left(-d^2/2t\right)$. By using the Davies's result, Folz \cite{Folz} established a Long-range weak Gaussian upper bound for heat kernel in the new metric $d_\theta(x,y)$ which was initiated by Davies in \cite{Davies3}.

Comparing to heat kernel upper bound, it is more difficult to get a heat kernel lower bound. Various techniques for
obtaining on-diagonal heat kernel lower bound were discussed earlier, especially, the form
\begin{equation*}
\tag{1.2}
p(t,x,x)\geq \frac{c_1}{V(x,c_2\sqrt{t})},
\end{equation*}
which is valid for all $x\in V$ and some positive constants $c_1,c_2$, has attracted considerable
attentions and interests of researchers.

In \cite{Delmotte}, Delmotte proved that the particular on-diagonal lower bound (1.2) is true on the graphs, which satisfy the continuous-time parabolic Harnack inequality $\mathcal{H}(C_{\mathcal{H}})$. In \cite{HLLY}, Horn et al. derived (1.2) under the condition of $CDE'(n,0)$. Motivated by the idea of Coulhon and Grigoryan \cite{Grigoryan2},
in this paper, we will only use the volume growth condition to obtain a weaker on-diagonal lower estimate of heat kernel
on graphs for large time. In order to achieve this, we first establish an upper bound of heat kernel, which is similar to
Folz's in \cite{Folz}.

It should be noted that, although the upper bound obtained in this paper is
similar to Folz's result, the metric in our result is natural
graph metric, namely, $d(x,y)$ is equal to the number of edges in
the shortest path between $x$ and $y$. And more significantly,
based on this upper bound, it enables us to establish the on-diagonal lower estimate of heat kernel on
graphs for large time under the condition of volume growth.

Before stating the results, we will introduce some definitions,
notations and lemmas in Section 2. We establish our main
results in Sections 3 and 4.

\section{Preliminaries}
In this section, we introduce some definitions, notations and lemmas which
will be used throughout the paper. For more details on these
terminology, we refer the readers to \cite{Davies2,Delmotte,HLLY,AW,RW}.

Suppose that $G=(V,E)$ is a finite or locally finite connected graph, where $V$ denotes the vertex set and $E$ denotes the edge set. We write $y\sim x$ if $y$ is adjacent to $x$, or equivalently $\overline{xy}\in E$, allow the edges on the graph to be weighted. Weights are given by a function $\omega: V \times V\rightarrow[0,\infty)$, that is, the edge $\overline{xy}$ has weight $\omega_{xy}\geq0$ and $\omega_{xy}=\omega_{yx}$.
Furthermore, let $\mu: V\rightarrow \mathbb{R}^+$ be a positive finite measure on the vertices of the $G$. In this paper,
all the graphs in our concern are assumed to satisfy
$$D_\mu:=\max_{x\in V}\frac{m(x)}{\mu(x)}<\infty,$$
where $m(x):=\sum_{y\sim x}\omega_{xy}.$

Let $C(V)$ be the set of real functions on $V$. For any $1\leq p<\infty$, we denote  by
$$\ell^p(V,\mu)=\left \{f\in C(V):\sum_{x \in V} \mu(x)|f(x)|^p<\infty\right \}$$
the set of $\ell^p$ integrable functions on $V$ with respect to
the measure $\mu$.  For $p=\infty$, let
$$\ell^{\infty}(V,\mu)=\left\{f\in C(V):\sup_{x\in V}|f(x)|<\infty\right\}.$$

The standard inner product is defined by
$$\big<f,g\big>=\sum_{x\in V}\mu(x)f(x)g(x),\;\;\mathrm{for}\;\mathrm{all}\; f,g\in \ell^2(V,\mu),$$
which makes $\ell^2(V,\mu)$ a Hilbert space.

For any function $f\in C(V)$, the $\mu$-Laplacian $\Delta$ of $f$ is defined by
$$\Delta f(x)=\frac{1}{\mu(x)}\sum_{y\sim x}\omega_{xy}(f(y)-f(x)),$$
it can be checked that $D_\mu<\infty$ is equivalent to the $\mu$-Laplacian $\Delta$ being bounded on $\ell^p(V,\mu)$ for
all $p\in [1,\infty]$ (see \cite{HAESELER}).

The gradient form $\Gamma$ associated with a $\mu$-Laplacian is defined by
$$\Gamma(f,g)(x)=\frac{1}{2\mu(x)}\sum_{y\sim x}\omega_{xy}(f(y)-f(x))(g(y)-g(x)).$$
We write $\Gamma(f)=\Gamma(f,f)$.

The connected graph can be endowed with its natural graph metric $d(x,y)$, i.e. the smallest number of edges of a path
between two vertices $x$ and $y$, then we define balls $B(x,r)=\{y\in V:d(x,y)\leq r\}$ for any $r\geq0$. The volume of a
subset $A$ of $V$ can be written as $V(A)$ and $V(A)=\sum_{x\in A}\mu(x)$, for convenience, we usually abbreviate
$V\big(B(x,r)\big)$ as $V(x,r)$. In addition, a graph $G$ satisfies a polynomial volume growth of degree $m$, if for all
$x\in V$, $r\geq 0$,
\begin{equation*}
V(x,r)\leq cr^m.
\end{equation*}

%%热核
We say that a function $p:(0,+\infty)\times V \times V\rightarrow \mathbb{R}$ is a fundamental solution of the heat
equation $u_t=\Delta u$ on $G=(V,E)$, if for any bounded initial condition $u_0:V\rightarrow \mathbb{R}$, the function
$$u(t,x)=\sum_{y\in V}p(t,x,y)u_0(y), \quad (t>0, \, x\in V)$$
is differentiable in $t$ and satisfies the heat equation, and for any $x\in V$,
$\lim\limits_{t\rightarrow0^+}u(t,x)=u_0(x)$ holds.

\newtheorem{definition}{\textbf{Definition}}[section]
\begin{definition}
\textnormal{Let a sequence of finite subsets $\{U_i\}_{i=1}^{\infty}$ be an exhaustion of $V$, that is,
$$U_1\subset U_2\subset\cdots\subset U_i\subset\cdots, \quad \textrm{and} \quad \cup_{i=1}^{\infty}U_i=V.$$
If we write $p_k(t,x,y)$ as the heat kernel on $U_k$, then we can define the heat kernel on $G$ by
$$p(t,x,y)=\lim_{k\rightarrow \infty}p_k(t,x,y).$$
This construction was carried out in \cite{MD,AW,RW}. Moreover, the authors showed $p_k\leq p_{k+1}$ for $k\in\mathbb{N}$
and indicated that the definition of the heat kernel is independent of the choice of the exhaustion. }
\end{definition}

For completeness, we recall some important properties of the heat kernel $p(t,x,y)$ (see \cite{HLLY,AW,RW}), as follows
\newtheorem{remark}{\textbf{Remark}}[section]
\begin{remark}
\textnormal{For $t,s>0$ and any $x,y\in V$, we have\\
(i) \quad $p(t,x,y)=p(t,y,x),$\\
(ii) \quad $p(t,x,y)\geq 0$,\\
(iii) \quad $\sum_{y\in V}\mu(y)p(t,x,y)\leq 1$,\\
(iv) \quad $\partial_t p(t,x,y)=\Delta_xp(t,x,y)=\Delta_yp(t,x,y)$,\\
(v) \quad $\sum_{z\in V}\mu(z)p(t,x,z)p(s,z,y)=p(t+s,x,y)$.}
\end{remark}

%%引理一
\newtheorem{lemma}{\textbf{Lemma}}[section]
\begin{lemma}
\textnormal{For all $x\in V$, $p(t,x,x)$ is non-increasing for $t\in (0,\infty)$.}
\end{lemma}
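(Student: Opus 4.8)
\section*{Proof proposal}

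The plan is to reduce the statement to the monotonicity of the $\ell^2$-norm of the heat kernel along the semigroup, which in turn follows from the non-positivity of the Laplacian. Fix $x\in V$ and set $u(t)=p(2t,x,x)$ for $t>0$. Applying the semigroup property (v) with $s=t$ and $y=x$, together with the symmetry (i), I would first record the identity
\[
u(t)=p(2t,x,x)=\sum_{z\in V}\mu(z)p(t,x,z)p(t,z,x)=\sum_{z\in V}\mu(z)p(t,x,z)^2=\|p(t,x,\cdot)\|_{\ell^2(V,\mu)}^2 .
\]
Since $p(2t,x,x)$ is finite, this also yields the useful byproduct that $p(t,x,\cdot)\in\ell^2(V,\mu)$ for every $t>0$, which legitimizes the $\ell^2$ manipulations below.

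Next I would differentiate $u$ in $t$. Because $D_\mu<\infty$, the Laplacian $\Delta$ is bounded on $\ell^2(V,\mu)$, so $t\mapsto p(t,x,\cdot)$ is an $\ell^2$-valued differentiable curve and differentiation may be carried out term by term. Using property (iv) to replace the time derivative by the spatial Laplacian gives
\[
u'(t)=2\sum_{z\in V}\mu(z)p(t,x,z)\,\partial_t p(t,x,z)=2\sum_{z\in V}\mu(z)p(t,x,z)\,\Delta_z p(t,x,z).
\]

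The decisive step is then an integration by parts (Green's formula) for the graph Laplacian. Writing $f=p(t,x,\cdot)$, the symmetry $\omega_{zy}=\omega_{yz}$ lets me symmetrize the resulting double sum over edges and recognize the gradient form, so that
\[
\sum_{z\in V}\mu(z)f(z)\,\Delta f(z)=-\sum_{z\in V}\mu(z)\Gamma(f)(z)\le 0,
\]
because $\Gamma(f)(z)=\frac{1}{2\mu(z)}\sum_{y\sim z}\omega_{zy}(f(y)-f(z))^2\ge 0$. Hence $u'(t)\le 0$, so $u$ is non-increasing; since $u(t)=p(2t,x,x)$, this is exactly the assertion that $s\mapsto p(s,x,x)$ is non-increasing on $(0,\infty)$.

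I expect the only genuine obstacle to be the analytic justification of the two formal operations, namely differentiating under the infinite sum and validating Green's formula on the locally finite graph. Both are controlled by the standing hypothesis $D_\mu<\infty$: it makes $\Delta$ a bounded (and, by the symmetry of $\omega$, self-adjoint) operator on $\ell^2(V,\mu)$ and, combined with $p(t,x,\cdot)\in\ell^2(V,\mu)$ established above, guarantees the absolute convergence needed so that the term-by-term differentiation and the symmetrization are rigorous.
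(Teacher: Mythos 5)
Your proposal is correct, and its computational core coincides with the paper's own proof: both write $p(2t,x,x)=\sum_z\mu(z)p(t,x,z)^2$ via properties (i) and (v), differentiate in $t$, convert $\partial_t$ into a spatial Laplacian by property (iv), and conclude with Green's formula that the derivative equals $-2\sum_z\mu(z)\Gamma(f)(z)\le 0$ for $f=p(t,x,\cdot)$ (the paper parametrizes with $t/2$ instead of doubling the time variable, which is the same identity). The genuine difference lies in how the analytic interchanges are justified, and that is the only substantive content of this lemma. The paper goes back to the exhaustion kernels $p_k$: each $p_k(t/2,x,\cdot)$ is non-zero at only finitely many vertices, so derivative and sum commute trivially, and the passage $p_k\to p$ is handled by monotonicity together with Dini's theorem. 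You instead argue at the $\ell^2$ level: $D_\mu<\infty$ makes $\Delta$ bounded and self-adjoint, $p(t,x,\cdot)\in\ell^2(V,\mu)$ follows from finiteness of $p(2t,x,x)$, and then $u'(t)=2\langle\Delta f,f\rangle\le 0$. This route is arguably cleaner --- it avoids Dini's theorem, whose application to a non-compact vertex set is itself delicate --- and your justification of Green's formula (absolute convergence of the double sum from $f\in\ell^2(V,\mu)$ and $D_\mu<\infty$, then symmetrization using $\omega_{zy}=\omega_{yz}$) is complete. The one soft spot is the assertion that boundedness of $\Delta$ makes $t\mapsto p(t,x,\cdot)$ a differentiable $\ell^2$-valued curve with derivative $\Delta p(t,x,\cdot)$: that does not follow formally from properties (i)--(v) alone, but requires identifying the exhaustion-defined kernel of Definition 2.1 with the kernel of the semigroup $e^{t\Delta}$ generated by the bounded Laplacian. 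This identification is true and is contained in the paper's references \cite{MD,AW,RW}, but you should cite or prove it explicitly, since it is precisely the step that substitutes for the paper's $p_k$-approximation argument.
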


%%引理一证明
\begin{proof}
By (v) of Remark 2.1, for any $t>0$ and $x\in V$,
$$p(t,x,x)=\sum_{y\in V}\mu(y)p^2\left(t/2,x,y\right).$$

It follows that
\begin{equation*}
\tag{2.1}
\begin{split}
\partial_t p(t,x,x)&=\partial_t \sum_{y\in V}\mu(y)p^2\left(t/2,x,y\right)\\
&=\lim_{k\rightarrow \infty}\partial_t \sum_{y\in V}\mu(y)p_k^2\left(t/2,x,y\right)\\
&=\lim_{k\rightarrow \infty}\sum_{y\in V}\mu(y)\partial_t\left( p_k^2\left(t/2,x,y\right)\right)\\
&=\lim_{k\rightarrow \infty}\sum_{y\in V}\mu(y)p_k\left(t/2,x,y\right)\Delta_x\left( p_k\left(t/2,x,y\right)\right)\\
&=\sum_{y\in V}\mu(y)p\left(t/2,x,y\right)\Delta_x\left( p\left(t/2,x,y\right)\right)\\
&=-\sum_{y\in V}\mu(y)\Gamma(p)(t/2,x,y)\\
&\leq 0.
\end{split}
\end{equation*}

\textbf{Note.} In the deduction process described above:
(i) The interchange of limitation and summation(or the interchange
of deviation and summation) are based on the fact that $p_k(t/2,x,y)$ is non-zero only for finitely many $y$.
(ii) The interchange of limitation and deviation in the second step of the calculation from above is due to
the uniform convergence of the sequences, the details are as follows:

Since $p_k$ is monotonous with respect to $k$, which, together
with the Dini theorem yields that $\{p_k(t/2,x,y)\}$ and
$\{p_k^2(t/2,x,y)\}$ both are uniformly convergent. Furthermore,
according to the definition of $\Delta$ and $D_\mu<\infty$,
$\{\Delta( p_k(t/2,x,y))\}$ also converges uniformly.

In conclusion, we obtain the equalities (2.1).
And the last equality turns out that the heat kernel $p(t,x,x)$
is non-increasing with respect to $t\in (0,\infty)$.

This completes the proof of Lemma 2.1.
\end{proof}

%%引理二
For global pointwise upper bounds of heat kernel $p(t,x,y)$ on general graph, Davies \cite{Davies2} obtained an important
proposition. It states that
\begin{lemma}[see \cite{Davies2}]
\textnormal{Let $\Phi$ be the set of all positive functions $\phi$ on $V$ such that $\phi^{\pm 1}\in \ell^\infty$, for
$x,y\in V$ and all $t>0$, we have
\begin{equation*}
p(t,x,y)\leq \big(\mu(x)\mu(y)\big)^{-\frac{1}{2}}\inf_{\phi\in \Phi}\{\phi(x)^{-1}\phi(y)e^{h(\phi)t}\},
\end{equation*}
where $h(\phi)=\sup_{x\in V}b(\phi,x)-\Lambda$ and
\begin{equation*}
b(\phi,x)=\frac{1}{2\mu(x)}\sum_{y\sim x}\omega_{xy} \left(\frac{\phi(y)}{\phi(x)}+\frac{\phi(x)}{\phi(y)}-2\right).
\end{equation*}
}
\end{lemma}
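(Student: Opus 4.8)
The plan is to carry out Davies' perturbation (twisting) method: fix $\phi\in\Phi$, conjugate the heat semigroup $P_t=e^{t\Delta}$ by the multiplication operator $M_\phi$, show that the twisted semigroup $Q_t:=M_\phi P_t M_\phi^{-1}$ has operator norm at most $e^{h(\phi)t}$ on $\ell^2(V,\mu)$, and then convert this $\ell^2$ bound into the pointwise kernel estimate. Throughout, I read $\Lambda$ as the bottom of the $\ell^2$-spectrum of $-\Delta$, so that $\langle w,-\Delta w\rangle=\sum_x\mu(x)\Gamma(w)(x)\ge\Lambda\|w\|_2^2$ for every $w\in\ell^2(V,\mu)$.

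First, for $f\in\ell^2(V,\mu)$ I would set $u(t,\cdot)=P_tf$ and study the weighted energy $F(t)=\|\phi\,u(t)\|_2^2=\sum_x\mu(x)\phi(x)^2u(t,x)^2$. Differentiating in $t$ — the interchange of $\partial_t$ with the sum being justified exactly as in the Note to Lemma 2.1, via $D_\mu<\infty$ and the uniform convergence of the exhausting kernels $p_k$ — gives $F'(t)=2\langle\phi^2u,\Delta u\rangle$. A summation by parts based on $\omega_{xy}=\omega_{yx}$ rewrites this entirely in terms of $w:=\phi u$; after completing the square, the expression splits as $-\tfrac12\sum_{x,y}\omega_{xy}(w(y)-w(x))^2$ plus a remainder whose summand carries precisely the weight $\frac{\phi(y)}{\phi(x)}+\frac{\phi(x)}{\phi(y)}-2$.

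The first, Dirichlet-form, term equals $-\langle w,-\Delta w\rangle\le-\Lambda\|w\|_2^2$, while for the remainder the elementary bound $w(x)w(y)\le\tfrac12(w(x)^2+w(y)^2)$, together with the nonnegativity of that weight and the symmetry of $\omega$, yields an estimate by $\big(\sup_x b(\phi,x)\big)\|w\|_2^2$. Combining the two pieces gives $F'(t)\le 2\,h(\phi)\,F(t)$, so Gronwall's inequality produces $\|\phi\,P_tf\|_2\le e^{h(\phi)t}\|\phi f\|_2$, that is, $\|Q_t\|_{2\to2}\le e^{h(\phi)t}$.

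Finally, to descend to the kernel I would note $\phi(x)\phi(y)^{-1}p(t,x,y)=\langle\delta_x/\mu(x),Q_t(\delta_y/\mu(y))\rangle$, split $Q_t=Q_{t/2}Q_{t/2}$ using the semigroup property of Remark 2.1(v), and apply Cauchy–Schwarz after transferring one factor to the adjoint $Q_{t/2}^*=M_\phi^{-1}P_{t/2}M_\phi$, which has the same norm. Since $\|\delta_x/\mu(x)\|_2=\mu(x)^{-1/2}$, this gives $\phi(x)\phi(y)^{-1}p(t,x,y)\le(\mu(x)\mu(y))^{-1/2}e^{h(\phi)t}$; rearranging and taking the infimum over $\phi\in\Phi$ yields the claim. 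The main obstacle is the rigorous justification of the differentiation and summation by parts on a possibly infinite graph, which I expect to handle through the boundedness of $\Delta$ (from $D_\mu<\infty$) and the exhaustion argument already invoked for Lemma 2.1; the completing-the-square algebra, though routine, is the conceptual heart that makes the quantity $b(\phi,x)$ emerge.
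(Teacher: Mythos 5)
Your proof is correct, and it is essentially the standard Davies perturbation (semigroup twisting) argument: bound $\|M_\phi P_t M_\phi^{-1}\|_{2\to 2}$ by $e^{h(\phi)t}$ via the weighted energy and the splitting into the Dirichlet form plus the $b(\phi,\cdot)$-remainder, then descend to the kernel with delta functions and Cauchy--Schwarz. The paper itself gives no proof of this lemma --- it simply cites Davies --- and your argument reconstructs precisely the method of that cited source, with the technical interchanges correctly justified by $D_\mu<\infty$ (which gives absolute convergence of the double sums and boundedness of $\Delta$ on $\ell^2(V,\mu)$).
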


\section{Main Results}

Our main results are stated in the following theorems.
%%定理一
\newtheorem{theorem}{\textbf{Theorem}}[section]
\begin{theorem}
\textnormal{If $x_1,x_2\in V$, then for all $t> 0$, we have
\begin{equation*}
\tag{3.1} p(t,x_1,x_2)\leq
\big(\mu(x_1)\mu(x_2)\big)^{-\frac{1}{2}}\exp\left(-\frac{d(x_1,x_2)}{2}\log\left(\frac{d(x_1,x_2)}{2D_\mu
et}\right)-\Lambda t \right),
\end{equation*}
where $\Lambda\geq 0$ is the bottom of the $\ell^2$ spectrum of $-\Delta$. }
\end{theorem}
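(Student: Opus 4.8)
The plan is to apply Davies's upper bound (Lemma 2.3) with a test weight that is exponential in the graph distance to $x_1$. Write $d=d(x_1,x_2)$ and, for a parameter $\alpha>0$ to be fixed later, set
$$\phi(z)=\exp\bigl(-\alpha\,(d(x_1,z)\wedge d)\bigr).$$
The truncation $\wedge d$ is there to guarantee $\phi^{\pm1}\in\ell^\infty$, so that $\phi\in\Phi$; without it $\phi^{-1}=\exp(\alpha\,d(x_1,\cdot))$ would be unbounded on an infinite graph and Lemma 2.3 would not apply. Since $d(x_1,x_1)=0$ and $d(x_1,x_2)\wedge d=d$, this choice gives $\phi(x_1)^{-1}\phi(x_2)=e^{-\alpha d}$, which is precisely the factor producing the distance dependence in (3.1).

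Next I would estimate $h(\phi)=\sup_{x\in V}b(\phi,x)-\Lambda$. The truncated distance $z\mapsto d(x_1,z)\wedge d$ is $1$-Lipschitz along edges, so for $y\sim x$ one has $|\log\phi(y)-\log\phi(x)|\le\alpha$ and hence
$$\frac{\phi(y)}{\phi(x)}+\frac{\phi(x)}{\phi(y)}-2\le e^{\alpha}+e^{-\alpha}-2\le e^{2\alpha}.$$
Summing against $\omega_{xy}$ and using $m(x)/\mu(x)\le D_\mu$ then bounds $b(\phi,x)\le D_\mu\,e^{2\alpha}$ uniformly in $x$. Feeding $\phi(x_1)^{-1}\phi(x_2)=e^{-\alpha d}$ and this estimate into Lemma 2.3 yields, for every $\alpha>0$,
$$p(t,x_1,x_2)\le\bigl(\mu(x_1)\mu(x_2)\bigr)^{-\frac12}\exp\bigl(-\alpha d+D_\mu t\,e^{2\alpha}-\Lambda t\bigr),$$
the term $-\Lambda t$ appearing for free through the definition of $h(\phi)$.

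It then remains to optimise the exponent $-\alpha d+D_\mu t\,e^{2\alpha}$ over $\alpha>0$. Differentiating, the minimiser is $\alpha=\tfrac12\log\frac{d}{2D_\mu t}$, and substituting back collapses the exponent to exactly $-\tfrac d2\log\frac{d}{2D_\mu e t}$, which is (3.1). The step I expect to require the most care is not any individual estimate but steering the whole chain to the stated closed form with the correct constants: the surrogate bound for $b(\phi,x)$ must carry precisely the $\alpha$-dependence $e^{2\alpha}$, since a looser constant would shift the argument of the logarithm. One must also check admissibility of the optimal $\alpha$, namely positivity, which holds exactly when $d>2D_\mu t$; in the complementary regime the optimiser lies at the boundary, but there the right-hand side of (3.1) exceeds $\bigl(\mu(x_1)\mu(x_2)\bigr)^{-1/2}e^{-\Lambda t}$, itself an upper bound for $p(t,x_1,x_2)$ obtained from Lemma 2.3 with $\phi\equiv1$, so (3.1) persists as a valid if non-sharp bound for all $t>0$. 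The truncation in the definition of $\phi$ is the other delicate point, as it is what legitimises Lemma 2.3 while leaving both $\phi(x_1)^{-1}\phi(x_2)$ and the Lipschitz estimate untouched.
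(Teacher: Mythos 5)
Your proof is correct, and its skeleton matches the paper's: both arguments feed an exponential of the truncated distance $d(x_1,\cdot)\wedge d(x_1,x_2)$ into Davies's bound (Lemma 2.3), estimate $b$ uniformly via the edge-Lipschitz property and $D_\mu$, and then optimize over the exponential rate. Where you genuinely diverge is in how the optimization is closed, and your version is more elementary and self-contained. The paper proves the calculus inequality $e^s+e^{-s}-2\le s^2e^s$, views $\min_{s>0}\{-s\gamma+\tfrac12 s^2e^s\}$ as a Legendre transform, and imports Folz's estimate $\widehat f(\gamma)\le-\tfrac{\gamma}{2}\log\tfrac{\gamma}{2e}$ to land on (3.1); you instead use the one-line bound $e^\alpha+e^{-\alpha}-2\le e^{2\alpha}$, for which the minimization is exactly solvable, and the minimizer $\alpha^*=\tfrac12\log\tfrac{d}{2D_\mu t}$ reproduces precisely the exponent $-\tfrac d2\log\tfrac{d}{2D_\mu e t}$ with no external citation. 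You are also more careful than the paper about the regime $d\le 2D_\mu t$, where your optimizer is nonpositive: falling back on $\phi\equiv1$ gives $p(t,x_1,x_2)\le(\mu(x_1)\mu(x_2))^{-1/2}e^{-\Lambda t}$, which is indeed dominated by the right-hand side of (3.1) there; in the paper this regime is absorbed silently into Folz's inequality, which for $\gamma\le 2e$ holds only because $\widehat f\le 0$ while $-\tfrac{\gamma}{2}\log\tfrac{\gamma}{2e}\ge 0$. One small remark: since $b(\phi,x)$ carries the factor $\tfrac{1}{2\mu(x)}$, your estimate actually yields $b(\phi,x)\le\tfrac{D_\mu}{2}e^{2\alpha}$; discarding the $\tfrac12$ is harmless, but keeping it would have produced the slightly sharper exponent $-\tfrac d2\log\tfrac{d}{D_\mu e t}$, so your method, executed tightly, even beats the stated bound.
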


\begin{remark}\textnormal{The upper bound formulated by (3.1) is similar to the Folz's
result in \cite{Folz}, but the metric in our results is natural
graph metric, which is the difference between them.}
\end{remark}

%%定理二
\begin{theorem}
\textnormal{Assume that, for all $x\in V$ and $r\geq r_0$,
\begin{equation*}
\tag{3.2} V(x,r)\leq c_0 r^m,
\end{equation*}
where $r_0,c_0,m$ are some positive constants. Then, for all large
enough $t$,
\begin{equation*}
\tag{3.3} p(t,x,x)\geq \frac{1}{4V(x,C t\log t)},
\end{equation*}
where $C>2D_\mu e$.}
\end{theorem}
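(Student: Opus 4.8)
The plan is to prove the on-diagonal lower bound (3.3) by combining the upper bound (3.1) from Theorem 3.1 with the normalization property (iii) of Remark 2.1, following the splitting idea of Coulhon and Grigoryan. The starting point is the conservation-type inequality $\sum_{y\in V}\mu(y)p(t,x,y)\leq 1$. I would split this sum over a ball $B(x,R)$ and its complement, writing
\begin{equation*}
1\geq \sum_{y\in V}\mu(y)p(t,x,y)\geq \sum_{y\in B(x,R)}\mu(y)p(t,x,y),
\end{equation*}
but to extract information about $p(t,x,x)$ I would instead work with the quantity $\sum_{y\notin B(x,R)}\mu(y)p(t,x,y)$ and bound it using the Gaussian-type upper estimate, so that the remaining mass inside $B(x,R)$ is controlled from below by a term comparable to $p(t,x,x)$.

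The key steps, in order, are as follows. First I would use the semigroup identity $p(t,x,x)=\sum_{y\in V}\mu(y)p^2(t/2,x,y)$ (already noted in the proof of Lemma 2.1) together with the Cauchy--Schwarz inequality, or alternatively argue directly via the monotonicity from Lemma 2.1, to relate $p(t,x,x)$ to the restricted sum $\sum_{y\in B(x,R)}\mu(y)p(t,x,y)$ over a suitably chosen radius $R$. Second, I would estimate the tail $\sum_{y\notin B(x,R)}\mu(y)p(t,x,y)$ by inserting the upper bound (3.1): since for $y\notin B(x,R)$ we have $d(x,y)\geq R$, the exponential factor $\exp\bigl(-\tfrac{d(x,y)}{2}\log(\tfrac{d(x,y)}{2D_\mu et})\bigr)$ becomes very small once $R$ is large relative to $2D_\mu e t$. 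Third, I would choose the radius of the form $R=C\,t\log t$ with $C>2D_\mu e$, which makes the ratio $\tfrac{R}{2D_\mu et}=\tfrac{C\log t}{2D_\mu e}$ grow logarithmically, so that the exponent $-\tfrac{R}{2}\log(\tfrac{R}{2D_\mu et})$ decays faster than any polynomial in $t$; combined with the polynomial volume growth (3.2), this forces the tail mass to be at most $\tfrac{1}{2}$ (or any fraction) for $t$ large. Consequently at least half the mass lives in $B(x,R)$, and translating this into a pointwise lower bound at the diagonal yields $p(t,x,x)\geq \tfrac{1}{4V(x,Ct\log t)}$, where the factor $\tfrac14$ absorbs the constant lost in the mass-splitting and the Cauchy--Schwarz step.

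The main obstacle I anticipate is the tail estimate in step three: I must verify that with $R=Ct\log t$ the summed tail $\sum_{d(x,y)\geq R}\mu(y)p(t,x,y)$ is genuinely small, which requires summing the product of the polynomially growing volume of annuli against the rapidly decaying exponential upper bound. This amounts to controlling a series of the form $\sum_{r\geq R}(\text{volume of annulus at radius }r)\cdot\exp\bigl(-\tfrac{r}{2}\log\tfrac{r}{2D_\mu et}\bigr)$, and the delicate point is confirming that the choice $C>2D_\mu e$ is exactly what guarantees convergence and smallness of this sum for all sufficiently large $t$. The condition $C>2D_\mu e$ ensures that already at $r=R$ the logarithm $\log\tfrac{r}{2D_\mu et}$ is bounded below by a positive multiple of $\log t$, so the leading term decays like $t^{-cR/t}=t^{-cC\log t}$ superpolynomially; I would then show the full series is dominated by its first term up to a constant, overwhelming the polynomial volume factor. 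Once this smallness is established the rest is bookkeeping, and the factor of $\tfrac14$ rather than $\tfrac12$ in the final bound comes from combining the one-half mass retention with the normalization by $V(x,R)$ and the diagonal comparison.
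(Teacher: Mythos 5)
Your proposal follows essentially the same route as the paper's proof: the semigroup identity $p(2t,x,x)=\sum_z\mu(z)p^2(t,x,z)$ plus Cauchy--Schwarz and the monotonicity of Lemma 2.1 reduce everything to showing the tail mass outside $B(x,Ct\log t)$ is at most $\tfrac{1}{2}$, which is then established exactly as in the paper by dyadic annuli, the upper bound (3.1), and the polynomial volume growth (3.2), with the factor $\tfrac14$ arising from $(1-\tfrac12)^2$. One harmless inaccuracy: at $r=Ct\log t$ the quantity $\log\frac{r}{2D_\mu e t}=\log\frac{C\log t}{2D_\mu e}$ is bounded below by a positive constant (growing like $\log\log t$), not by a multiple of $\log t$, but this still yields the superpolynomial decay your argument needs, so the plan is sound.
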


\begin{remark}
\textnormal{
Although this estimate may be not as sharp as Gaussian estimate in
\cite{HLLY}, its condition is far weaker than the latter.
Actually, Bauer et al. \cite{BHLLMY} have concluded that the
$CDE(n,0)$ and $CDE'(n,0)$ implies the volume growth (3.2). So, in
some cases, the estimate (3.3) would have broader applications
compared to the Gaussian estimate in \cite{HLLY}.}
\end{remark}

\begin{remark}
\textnormal{
In \cite{Grigoryan2}, Coulhon and Grigoryan proved the pointwise lower bound of $p(t,x,x)$ on non-compact manifolds $M$,
that is, if for all $r\geq r'$ and some $x\in M$,
\begin{equation*}
V(x,r)\leq c'r^m,
\end{equation*}
then for all $t>t_0$,
\begin{equation*}
p(t,x,x)\geq  \frac{1}{4V(x,\sqrt{C't\log t})}.
\end{equation*}
Moreover, in \cite{Grigoryan3}, Grigoryan established the lower bound for discrete-time kernel $p_{2n}(x,x)$ on graphs,
whose form is
\begin{equation*}
p_{2n}(x,x)\geq  \frac{1}{4V(x,\sqrt{C'n\log (2n)})}.
\end{equation*}
However, in the continuous-time setting, the lower bound for heat kernel $p(t,x,x)$
will have a weaker form, i.e., inequality (3.3).
}
\end{remark}

\section{Proof of the main results}
We are now in a position to prove our main results.

%%定理一证明
\begin{proof}[Proof of Theorem 3.1.]
According to the Lemma 2.2, we have, for all $x,y\in V$ and $t>0$, the estimate
\begin{equation*}
\tag{4.1}
p(t,x,y)\leq \left(\mu(x)\mu(y)\right)^{-\frac{1}{2}}\inf_{\psi\in
\ell^\infty(V,\mu)}\exp\left(\psi(x)-\psi(y)+h(\psi)t\right),
\end{equation*}
where $h(\psi)=\sup_{x\in V}b(\psi,x)-\Lambda$ and
\begin{equation*}
b(\psi,x)=\frac{1}{2\mu(x)}\sum_{y\sim x}\omega_{xy} \left(e^{\psi(y)-\psi(x)}+e^{\psi(x)-\psi(y)}-2\right).
\end{equation*}

Fix $x_1,x_2\in V$ and set $D:=d(x_1,x_2)$. For $s>0$, we define
\begin{equation*}
\psi(x)=s\left(D\wedge d(x,x_1)\right)\in \ell^\infty(V,\mu).
\end{equation*}

Using the triangle inequality for the graph metric $d(x,y)$, we have
\begin{equation*}
\psi(y)-\psi(x)\leq sd(x,y), \quad \, (y\sim x).
\end{equation*}

On the other hand, from the fact that the function $g(t)=e^t+e^{-t}=2\cosh(t)$ is increasing for $t\in (0,+\infty)$, we
obtain
\begin{equation*}
\begin{split}
b(\psi,x)&\leq\frac{1}{2\mu(x)}\sum_{y\sim x}\omega_{xy} \left(e^{s d(x,y)}+e^{-s d(x,y)}-2\right)\\
&=\frac{1}{2\mu(x)}\sum_{y\sim x}\omega_{xy} \left(e^{s}+e^{-s}-2\right).
\end{split}
\end{equation*}

Considering a function
$$f(v)=v+\frac{1}{v}-2-v\log^2 v, \;\;  v\in[1,\infty).$$

Differentiating with respect to $v$ gives
\begin{equation*}
\begin{split}
f'(v) &= -\frac{1}{v^{2}}\left(  v^{2}\log^{2}v+2v^{2}\log
v-v^{2}+1 \right)\\
&= -\frac{1}{v^{2}}f_{1}(v)
\end{split}
\end{equation*}
and
\begin{equation*}
f_{1}'(v)=2v\left(  \log v\right)  \left(  \log v+3\right)
\geq 0, \quad v\in[1,\infty).
\end{equation*}

Hence, we conclude that
$$f_{1}(v)\geq f_{1}(1)=0, \;\;  v\in[1,\infty),$$
and then
$$f(v)\leq f(1)=0, \;\;  v\in[1,\infty),$$
which implies that
$$f(v)=v+\frac{1}{v}-2-v\log^{2}v\leq 0, \;\; v\in[1,\infty).$$

Taking $v=e^s$ in the above equality, then we have
$e^{s}+e^{-s}-2\leq s^2e^s$, which gives
\begin{equation*}
\begin{split}
b(\psi,x)&\leq \frac{s^2e^s}{2\mu(x)}\sum_{y\sim x}\omega_{xy}\\
&\leq \frac{s^2e^s}{2}D_\mu.
\end{split}
\end{equation*}

Since this estimate holds uniformly in $x$, we obtain
$$h(\psi)=\sup_{x\in V}b(\psi,x)-\Lambda\leq \frac{s^2e^s}{2}D_\mu-\Lambda.$$

Combining this estimate with (4.1), we get, for any $s>0$,
\begin{equation*}
\begin{split}
p(t,x_1,x_2)&\leq \big(\mu(x_1)\mu(x_2)\big)^{-\frac{1}{2}}\exp\left(\psi(x_1)-\psi(x_2)+\frac{s^2e^s}{2}D_\mu t-\Lambda t \right)\\
&=\big(\mu(x_1)\mu(x_2)\big)^{-\frac{1}{2}}\exp\left(-s d(x_1,x_2)+\frac{s^2e^s}{2} D_\mu t-\Lambda t \right)\\
&=\big(\mu(x_1)\mu(x_2)\big)^{-\frac{1}{2}}\exp\left(D_\mu t\left(-s \frac{d(x_1,x_2)}{D_\mu
t}+\frac{s^2e^s}{2}\right)-\Lambda t \right).
\end{split}
\end{equation*}

In view of the arbitrariness of $s$, we have
\begin{equation*}
\tag{4.2}
p(t,x_1,x_2)\leq \min_{s>0}\left\{\big(\mu(x_1)\mu(x_2)\big)^{-\frac{1}{2}}\exp\left(D_\mu t\left(-s
\frac{d(x_1,x_2)}{D_\mu t}+\frac{s^2e^s}{2}\right)-\Lambda t \right)\right\}.
\end{equation*}

Setting $f(s)=\frac{s^2e^s}{2}$ and $\gamma=\frac{d(x_1,x_2)}{D_\mu t}$, the estimate (4.2) becomes
\begin{equation*}
\tag{4.3}
p(t,x_1,x_2)\leq \big(\mu(x_1)\mu(x_2)\big)^{-\frac{1}{2}}\exp\left(D_\mu t\widehat{f}(\gamma)-\Lambda t \right),
\end{equation*}
where $\widehat{f}$ is the Legendre transform of $f$, defined by
$$\widehat{f}(\gamma)=\min_{s>0}\left\{-s\gamma+f(s)\right\}.$$

In \cite{Folz}, Folz concluded that
\begin{equation*}
\widehat{f}(\gamma)\leq -\frac{\gamma}{2}\log\left(\frac{\gamma}{2e}\right).
\end{equation*}

Applying this estimate to (4.3) yields
\begin{equation*}
\begin{split}
p(t,x_1,x_2)&\leq \big(\mu(x_1)\mu(x_2)\big)^{-\frac{1}{2}}\exp\left(D_\mu
t\left(-\frac{\gamma}{2}\log\left(\frac{\gamma}{2e}\right)\right)-\Lambda t \right)\\
&=\big(\mu(x_1)\mu(x_2)\big)^{-\frac{1}{2}}\exp\left(-\frac{d(x_1,x_2)}{2}\log\left(\frac{d(x_1,x_2)}{2D_\mu e
t}\right)-\Lambda t \right).
\end{split}
\end{equation*}

The proof of Theorem 3.1 is complete.
\end{proof}

%%定理二证明
\begin{proof}[Proof of Theorem 3.2.]
By utilizing the properties of heat kernel and the Cauchy-Schwarz inequality, we obtain, for any $r>0$,

\begin{equation*}
\tag{4.4}
\begin{split}
p(2t,x,x)&=\sum_{z\in V}\mu(z)p^2(t,x,z)\\
&\geq \sum_{z\in B(x,r)}\mu(z)p^2(t,x,z)\\
&\geq \frac{1}{V(x,r)}\left(\sum_{z\in B(x,r)}\mu(z)p(t,x,z)\right)^2\\
&= \frac{1}{V(x,r)}\left(1-\sum_{z\in B(x,r)^c}\mu(z)p(t,x,z)\right)^2.
\end{split}
\end{equation*}

Using Lemma 2.1, we obtain from (4.4) that
\begin{equation*}
\tag{4.5}
\begin{split}
p(t,x,x)\geq \frac{1}{V(x,r)}\left(1-\sum_{z\in B(x,r)^c}\mu(z)p(t,x,z)\right)^2.
\end{split}
\end{equation*}

Suppose that we can find $r=r(t)$ so that
\begin{equation*}
\tag{4.6}
\sum_{z\in B(x,r)^c}\mu(z)p(t,x,z)\leq \frac{1}{2},
\end{equation*}
then (4.5) implies
\begin{equation*}
p(t,x,x)\geq \frac{1}{4V(x,r)},
\end{equation*}
which will allow us to obtain the desired result, if $r=Ct\log t$.

Let us now prove (4.6) with $r(t)=Ct\log t$. According to the Theorem 3.1, we get
\begin{equation*}
\begin{split}
p(t,x,z)&\leq \big(\mu(x)\mu(z)\big)^{-\frac{1}{2}}\exp\left(-\frac{d(x,z)}{2}\log\left(\frac{d(x,z)}{2D_\mu
et}\right)-\Lambda t \right)\\
&\leq \frac{1}{\mu_0}\exp\left(-\frac{d(x,z)}{2}\log\left(\frac{d(x,z)}{2D_\mu et}\right)-\Lambda t \right),
\end{split}
\end{equation*}
where $\mu_0:=\inf_{x\in V}\mu(x)>0$.

Hence, for $r\geq r_0$, we have
\begin{equation*}
\begin{split}
\sum_{z\in B(x,r)^c}\mu(z)p(t,x,z)&\leq \frac{1}{\mu_0}\sum_{z\in
B(x,r)^c}\mu(z)\exp\left(-\frac{d(x,z)}{2}\log\left(\frac{d(x,z)}{2D_\mu et}\right)-\Lambda t \right)\\
&=\frac{1}{\mu_0}\sum_{k=0}^{\infty}\sum_{z\in B(x,2^{k+1}r)\backslash B(x,2^k
r)}\mu(z)\exp\left(-\frac{d(x,z)}{2}\log\left(\frac{d(x,z)}{2D_\mu et}\right)-\Lambda t \right)\\
&\leq\frac{1}{\mu_0}\sum_{k=0}^{\infty}\sum_{z\in B(x,2^{k+1}r)\backslash B(x,2^k r)}\mu(z)\exp\left(-\frac{2^k
r}{2}\log\left(\frac{2^kr}{2D_\mu et}\right)-\Lambda t \right)\\
&\leq\frac{1}{\mu_0}\sum_{k=0}^{\infty}\exp\left(-\frac{2^k r}{2}\log\left(\frac{2^kr}{2D_\mu et}\right)-\Lambda t
\right)V(x,2^{k+1}r)\\
&\leq\frac{c_0}{\mu_0}\sum_{k=0}^{\infty}(2^{k+1}r)^m\exp\left(-\frac{2^k r}{2}\log\left(\frac{2^kr}{2D_\mu
et}\right)-\Lambda t \right),\\
\end{split}
\end{equation*}
where we split the complement of $B(x,r)$ into the union of the annuli $B(x,2^{k+1}r)\backslash B(x,2^k r)$,
$k=0,1,2,\ldots$, and use the fact that $\exp\left(-\frac{y}{2}\log \frac{y}{2D_\mu et}-\Lambda t\right)$ is decreasing
with respect to $y$.
Setting
\begin{equation*}
a_k=(2^{k+1}r)^m\exp\left(-\frac{2^k r}{2}\log\left(\frac{2^kr}{2D_\mu et}\right)-\Lambda t \right), \quad \,
(k=0,1,\cdots).
\end{equation*}

Direct calculation gives
\begin{equation*}
\begin{split}
\frac{a_{k+1}}{a_k}&=2^m\exp\left(-2^k r\log\left(\frac{2^kr}{D_\mu et}\right)+2^{k-1} r\log\left(\frac{2^{k-1}r}{D_\mu
et}\right)\right)\\
&=2^m\exp\left(-2^{k-1}r\left(\log\left(\frac{2^kr}{D_\mu et}\right)^2-\log\left(\frac{2^{k-1}r}{D_\mu
et}\right)\right)\right)\\
&=2^m\exp\left(-2^{k-1}r\log\left(\frac{2^{k+1}r}{D_\mu et}\right)\right).
\end{split}
\end{equation*}

If $\frac{2r}{D_\mu et}>1$, then
\begin{equation*}
-2^{k-1}r\log\left(\frac{2^{k+1}r}{D_\mu et}\right)\leq -2^{-1}r\log\left(\frac{2r}{D_\mu et}\right),
\end{equation*}
and therefore
\begin{equation*}
\frac{a_{k+1}}{a_k}\leq 2^m\exp\left(-\frac{r}{2}\log\left(\frac{2r}{D_\mu et}\right)\right).
\end{equation*}

Assuming that $\frac{r}{2}\log\left(\frac{2r}{D_\mu et}\right)\geq m$, then we have
\begin{equation*}
\frac{a_{k+1}}{a_k}\leq \left(\frac{2}{e}\right)^m<1,
\end{equation*}
and the sum of $\{a_k\}$ becomes
\begin{equation*}
\sum_{k=0}^{\infty}a_k\leq \frac{a_0}{1-\frac{2}{e}}.
\end{equation*}

Combining the results discussed above, we obtain
\begin{equation*}
\tag{4.7}
\sum_{z\in B(x,r)^c}\mu(z)p(t,x,z)\leq K r^m\exp\left(-\frac{r}{2}\log\left(\frac{r}{2D_\mu et}\right)-\Lambda t \right)
\end{equation*}
for $r\geq r_0$,\, $\frac{2r}{D_\mu et}>1$ and $\frac{r}{2}\log\left(\frac{2r}{D_\mu et}\right)\geq m$, where $K=\frac{2^mc_0}{\mu_0\left(1-\frac{2}{e}\right)}$.

Choosing $r=r(t)=Ct\log t$, where $C$ is a positive constant satisfying $C>2D_\mu e$. Besides, in any case, the $r(t)$
will satisfy the following conditions:
$$\mathrm{(i)}\;\;r(t)\geq r_0;\;\;\mathrm{(ii)}\;\;\frac{2r(t)}{D_\mu et}>1;\;\;\mathrm{(iii)}\;\;
\frac{r(t)}{2}\log\left(\frac{2r(t)}{D_\mu et}\right)\geq m.$$

By the monotonicity of $r(t)$, the condition (i) listed above is easy to be achieved, here we assume that $t_1$ is the
minimum of $t$ which satisfies the condition (i). Indeed, by combining (i) and (iii), one can deduce the condition (ii)
directly. As for the condition (iii), we observe that the left hand side of (iii) tends to $+\infty$ as
$t\rightarrow +\infty$, so we can choose a real number $t_2$ such that the left hand side of (iii) is equal or larger
than $m$ for all $t\geq t_2$. And then substituting $r(t)=Ct\log t$ into (4.7), it follows that                                                                  \begin{equation*}
\tag{4.8}
\begin{split}
\sum_{z\in B(x,r)^c}\mu(z)p(t,x,z)
&\leq K \left(Ct\log t\right)^m
\exp\left(-\frac{Ct\log t}{2}\log\left(\frac{Ct\log t}{2D_\mu et}\right)-\Lambda t \right)\\
&= KC^me^{-\Lambda t}(\log t)^m
t^{m}\left(\frac{C\log t}{2D_\mu e}\right)^{-\frac{Ct\log t}{2}}\\
&=KC^me^{-\Lambda t}
t^{m}(\log t)^{m-\frac{Ct\log t}{2}}\left(\frac{C}{2D_\mu e}\right)^{-\frac{Ct\log t}{2}}.
\end{split}
\end{equation*}

In view of the above assumption $C>2D_\mu e$, which implies that
$$\left(\frac{C}{2D_\mu e}\right)^{-\frac{Ct\log t}{2}}\rightarrow 0\quad \textrm{as}\quad t\rightarrow +\infty.$$

In addition, it is easy to observe that
$$\lim_{t\rightarrow +\infty}t^m(\log t)^{m-\log t}=0.$$

We thus conclude that the right hand side of (4.8) approaches $0$
as $t\rightarrow +\infty$. So, we can choose a real number $T\geq\max\{t_1,t_2\}$
such that the right hand side of (4.8) is equal or less than $\frac{1}{2}$ for all $t\geq T$.
This yields the required inequality (4.6).

The proof of Theorem 3.2 is complete.
\end{proof}

\section*{Acknowledgments}
This research is supported by the National Science Foundation of China (Grant No.11671401).

\def\refname{\Large\textbf{References}}

\vspace{25pt}

{\setlength{\parindent}{0pt}
Yong Lin,\\
Department of Mathematics, Renmin University of China,  Beijing, 100872, P. R. China\\
linyong01@ruc.edu.cn\\
Yiting Wu,\\
Department of Mathematics, Renmin University of China,  Beijing, 100872, P. R. China\\
yitingly@126.com
                   }

\begin{thebibliography}{20}
{\small \bibitem{BHLLMY}
F. Bauer, P. Horn, Y. Lin, G. Lippner, D. Mangoubi, S. T. Yau,  Li-Yau inequality on graphs, {\it J. Differential Geom.},
(3) {\textbf {99}}, (2015), 359--405.
\bibitem{bobo}
F. Bauer, B.Hua, S.T.Yau, Sharp Davies-Gaffney-Grigoryan Lemma on Graphs, arXiv:1604.01911v1, 2016.
\bibitem{Grigoryan2}
T. Coulhon, A. Grigoryan, On-diagonal lower bounds for heat kernels on non-compact manifolds and Markov chains, {\it Duke
Math. J.}, {\textbf {89}}, (1997), no.1, 133-109.
\bibitem{Davies1}
E. B. Davies, Heat kernels and spectral theory, Cambridge Tracts in Mathematics, vol.92, Cambridge University Press,
Cambridge, 1989.
\bibitem{Davies2}
E. B. Davies, Large deviations for heat kernels on graphs, {\it J. London Math. Soc.}, (1) {\textbf {47}}, (1993),
65--72.
\bibitem{Davies3}
E. B. Davies, Analysis on graphs and noncommutative geometry, {\it J. Funct. Anal.}, (2) {\textbf {111}}, (1993),
398--430.
\bibitem{Delmotte}
T. Delmotte, Parabolic harnack inequality and estimates of Markov chains on graphs, {\it Rev. Mat. Iberoam.}, (1)
{\textbf{15}}, (1999), 181--232.
\bibitem{Folz}
M. Folz, Gaussian upper bounds for heat kernels of continuous time simple random walks, {\it Electronic Journal of
Probability.}, {\textbf {16}}, (2011), Article 62, 1693--1722.
\bibitem{Grigoryan}
A. Grigoryan, Heat kernel and analysis on manifold, American Mathematical Society, Studies in Advanced Mathematics,
vol.47, International Press, America, 2009.
\bibitem{Grigoryan3}
A. Grigoryan, Analysis on graphs, Lecture notes at University of Bielefeld, 2009.
\bibitem{HAESELER}
S. Haeseler, M. Keller, D. Lenz, R. Wojciechowski,  Laplacians on infinite graphs: Dirichlet and Neumann boundary
conditions, {\it J. Spectr. Theory.}, (4) {\textbf 2}, (2012), 397--432.
\bibitem{HLLY}
P. Horn, Y. Lin, S. Liu, S. T. Yau,  Volume doubling, Poincare inequality and Gaussian heat kernel estimate for
non-negatively curved graphs, arXiv:1411.5087v4, 2015.
\bibitem{MD}
M. Keller, D. Lenz, Dirichlet forms and stochastic completeness of graphs and subgraphs, {\it J. Reine Angew. Math.},
{\textbf {666}}, (2012), 189--223.
\bibitem{LY}
P. Li, S. T. Yau, On the parabolic kernel of the Schr$\ddot{\rm{o}}$dinger operator, {\it Acta. Math.}, (3-4) {\textbf
{156}}, (1986), 153--201.
\bibitem{LLY2}
Y. Lin, S. Liu, Y. Yang, A gradient estimate for positive
functions on graphs, {\it J. Geom. Anal.}, (2016), doi:10.1007/s12220-016-9735-6.
\bibitem{LLY1}
Y. Lin, S. Liu, Y. Yang, Global gradient estimate on graph and
its applications, {\it Acta Math. Sinica, English Ser.}, (11) {\textbf {32}}, (2016), 1350--1356.
\bibitem{AW}
A. Weber, Analysis of the physical Laplacian and the heat flow on a locally finite graph, {\it J. Math. Anal. Appl.}, (1)
{\textbf {370}}, (2010), 146--158.
\bibitem{RW}
R. Wojciechowski, Heat kernel and essential spectrum of infinite graphs, {\it Indiana Univ. Math. J.}, (3) {\textbf
{58}}, (2009), 1419--1442.
}
\end{thebibliography}
\end{document}